\newtheorem{theorem}{Theorem}
\newtheorem{lemma}{Lemma}
\newtheorem{corollary}{Corollary}
\title{The Mean Value Theorem: Analytical Proof and Computational Approaches}
\author{
  Márcio Matheus de Lima Barboza \\
  Universidade Federal do Rio Grande do Norte \\
  Natal-RN\\
  \texttt{marcio.barboza.024@ufrn.edu.br} \\
   \And
  Francisco Marcio Barboza \\
   Universidade Federal do Rio Grande do Norte  \\
   Departamento de Computação e Tecnologia \\
  Caicó-RN\\
  \texttt{marcio.barboza@ufrn.br} \\
}
\begin{document}
\maketitle

\begin{abstract}
In this paper, we explore two fundamental theorems of differential calculus: Rolle's Theorem and the Mean Value Theorem (MVT). These theorems play a crucial role in the development of theoretical and practical results in mathematics, serving as the basis for various applications in analysis and modeling of real-world phenomena. Initially, we present the formal statements and their respective analytical proofs, highlighting the mathematical rigor necessary for understanding them. Additionally, we discuss the geometric interpretation of both theorems, emphasizing their importance in understanding properties of differentiable functions. The goal of this work is not only to validate these theorems through analytical methods but also to perform their computational verification, providing an integrated view between theory and practice.
\end{abstract}

\keywords{Rolle's Theorem \and Mean Value Theorem \and analytical proofs \and computational verification}

\section{Introduction}

Rolle's Theorem and the Mean Value Theorem (MVT) are fundamental results in differential calculus, with profound theoretical and practical implications. Rolle's Theorem establishes conditions for the existence of at least one critical point in a differentiable function on a closed interval, while the Mean Value Theorem generalizes this result, connecting the average rate of change of a function with the slope of its derivative at a specific point \cite{spivak_calculus, apostol_analysis}.

The importance of these theorems transcends calculus, being used in various areas of mathematics, such as the analysis of functions, the study of differential equations, and optimization. Furthermore, they have direct applications in fields like physics and engineering, enabling the modeling of dynamic phenomena and the analysis of complex behaviors \cite{stewart_calculus, strang_calculus}.

In this paper, our objective is to explore Rolle's and the Mean Value Theorem from two complementary perspectives: the rigorous analytical proof and the geometric and computational verification. To this end, we will use mathematical and computational tools to illustrate the applicability and validity of these results in different contexts \cite{rolle_history}.

The proposed approach aims not only to reinforce theoretical understanding but also to highlight the connection between mathematical abstraction and its practical applications, emphasizing the pedagogical and scientific relevance of these theorems \cite{spivak_calculus, strang_calculus}.


\section{Lemmas and Theorems}
The lemmas and theorems presented in this section are taken from the book \cite{lages2011curso}.
\begin{lemma}
Let \( X \subseteq \mathbb{R} \), \( a \in \mathbb{R} \), \( f, g: X \to \mathbb{R} \).  
If 
\[
\lim_{x \to a} f(x) = L \quad \text{and} \quad \lim_{x \to a} g(x) = M,
\] 
with \( L < M \), then there exists \( \delta > 0 \) such that for all \( x \in X \),  
\[
0 < |x - a| < \delta \implies f(x) < g(x).
\]
\label{lemma1}
\end{lemma}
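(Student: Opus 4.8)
The plan is to exploit the standard $\varepsilon$-$\delta$ characterization of the limit twice, once for each function, and then combine the two resulting inequalities by a suitable choice of $\varepsilon$. The key observation is that since $L < M$, the gap $M - L$ is strictly positive, so I can use it to manufacture an $\varepsilon$ that separates the two limits. The natural choice is $\varepsilon = \frac{M - L}{2}$, which places the midpoint $\frac{L + M}{2}$ exactly one $\varepsilon$ away from both $L$ and $M$.

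First I would fix $\varepsilon = \frac{M - L}{2} > 0$. Applying the definition of $\lim_{x \to a} f(x) = L$, I obtain a $\delta_1 > 0$ such that for all $x \in X$ with $0 < |x - a| < \delta_1$ we have $|f(x) - L| < \varepsilon$, and in particular $f(x) < L + \varepsilon = \frac{L + M}{2}$. Similarly, applying the definition of $\lim_{x \to a} g(x) = M$, I obtain a $\delta_2 > 0$ such that for all $x \in X$ with $0 < |x - a| < \delta_2$ we have $|g(x) - M| < \varepsilon$, and in particular $g(x) > M - \varepsilon = \frac{L + M}{2}$.

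Next I would set $\delta = \min\{\delta_1, \delta_2\} > 0$. Then for any $x \in X$ satisfying $0 < |x - a| < \delta$, both of the above bounds hold simultaneously, so
\[
f(x) < \frac{L + M}{2} < g(x),
\]
which yields $f(x) < g(x)$, as required. This $\delta$ works for every such $x$, completing the argument.

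I do not expect any genuine obstacle here: the entire proof is a routine application of the triangle-inequality unpacking of the limit definition. The only point requiring a moment of care is the choice of $\varepsilon$ — picking $\frac{M-L}{2}$ so that the two one-sided bounds meet cleanly at the midpoint $\frac{L+M}{2}$ — and the standard trick of taking the minimum of the two radii $\delta_1$ and $\delta_2$ so that both estimates are in force on the same punctured neighborhood. Once these are in place the conclusion is immediate.
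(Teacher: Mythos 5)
Your proof is correct and takes essentially the same route as the paper's: both arguments hinge on the midpoint \( \frac{L+M}{2} \), extract \( \delta_1 \) and \( \delta_2 \) from the two limit definitions, and conclude on the punctured neighborhood of radius \( \delta = \min(\delta_1, \delta_2) \). If anything, your version is more careful than the paper's, which begins with an unspecified ``Let \( \varepsilon > 0 \)'' and then asserts \( L + \varepsilon < M - \varepsilon \) ``by the choice of \( c = \frac{L+M}{2} \)'' --- an inequality that holds only when \( \varepsilon < \frac{M-L}{2} \), i.e.\ precisely the quantitative choice \( \varepsilon = \frac{M-L}{2} \) that you make explicit.
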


\begin{proof}
Let \( \varepsilon > 0 \) and \( c = \frac{L + M}{2} \), so that \( L < c < M \). Since \( \lim_{x \to a} f(x) = L \), there exists \( \delta_1 > 0 \) such that for all \( x \in X \),  
\[
0 < |x - a| < \delta_1 \implies f(x) \in (L - \varepsilon, L + \varepsilon).
\]
Similarly, since \( \lim_{x \to a} g(x) = M \), there exists \( \delta_2 > 0 \) such that for all \( x \in X \),  
\[
0 < |x - a| < \delta_2 \implies g(x) \in (M - \varepsilon, M + \varepsilon).
\]

By choosing \( \delta = \min(\delta_1, \delta_2) \), we have for all \( x \in X \) with \( 0 < |x - a| < \delta \),  
\[
f(x) \in (L - \varepsilon, L + \varepsilon) \quad \text{and} \quad g(x) \in (M - \varepsilon, M + \varepsilon).
\]
Since \( L + \varepsilon < M - \varepsilon \) (by the choice of \( c = \frac{L + M}{2} \)), it follows that  
\[
f(x) < g(x).
\]
The proof is complete.
\end{proof}

\begin{corollary}
Let \( X \subseteq \mathbb{R} \) and \( a \in \mathbb{R} \).  
If \( \lim_{x \to a} f(x) = L > 0 \), then there exists \( \delta > 0 \) such that for all \( x \in X \),  
\[
0 < |x - a| < \delta \implies f(x) > 0.
\]
\label{corollary1}
\end{corollary}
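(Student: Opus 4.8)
The plan is to obtain this as an immediate application of Lemma~\ref{lemma1}, reading the desired inequality $f(x) > 0$ as $0 < f(x)$ and treating the constant function $0$ as the smaller of the two functions. Concretely, I would introduce the constant function $g_0 : X \to \mathbb{R}$ defined by $g_0(x) = 0$ for every $x \in X$, and then invoke Lemma~\ref{lemma1} with the roles assigned so that $g_0$ plays the part of the function with the smaller limit and $f$ plays the part of the function with the larger limit.

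To license that invocation I first need to check the two hypotheses of Lemma~\ref{lemma1}. The first is that both functions have limits at $a$: we are given $\lim_{x \to a} f(x) = L$, and the constant function satisfies $\lim_{x \to a} g_0(x) = 0$, which is immediate from the definition of the limit, since for any $\varepsilon > 0$ every $\delta$ works because $|g_0(x) - 0| = 0 < \varepsilon$. The second hypothesis is the strict ordering of the limits; here it is supplied exactly by the assumption $L > 0$, rewritten as $0 < L$, which gives $\lim_{x \to a} g_0(x) < \lim_{x \to a} f(x)$.

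With both hypotheses verified, Lemma~\ref{lemma1} produces a $\delta > 0$ such that for every $x \in X$ with $0 < |x - a| < \delta$ we have $g_0(x) < f(x)$, that is, $0 < f(x)$, which is precisely the conclusion $f(x) > 0$ that we want. The main obstacle here is essentially nonexistent: once the statement is recognized as the specialization of Lemma~\ref{lemma1} in which the lower function is constant, the only genuine content is the elementary fact that a constant function converges to its constant value. If one preferred a self-contained argument instead, the same conclusion follows directly from the definition of the limit by choosing $\varepsilon = L/2 > 0$, which yields $f(x) > L - \tfrac{L}{2} = \tfrac{L}{2} > 0$ on the corresponding punctured neighborhood; but routing through Lemma~\ref{lemma1} keeps the exposition uniform and emphasizes that the corollary is a genuine consequence of the preceding result.
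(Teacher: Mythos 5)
Your proof is correct and takes essentially the same approach as the paper: both obtain the corollary by specializing Lemma~\ref{lemma1} with the constant zero function as the comparison function. If anything, your role assignment (the zero function as the one with the smaller limit, $f$ as the one with the larger limit, so that $0 < L$ literally matches the lemma's hypothesis ``$L < M$'') is stated more carefully than the paper's own proof, which writes ``choose $g(x) = 0$ and $M = 0$ with $L > 0$'' and thus leaves the necessary swap of roles implicit.
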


\begin{proof}
From Lemma, choose \( g(x) = 0 \) and \( M = 0 \) with \( L > 0 \). Thus, there exists \( \delta > 0 \) such that for all \( x \in X \) with \( 0 < |x - a| < \delta \),  
\[
f(x) > g(x) = 0.
\]
\end{proof}


\section{Rolle's Theorem and the Mean Value Theorem}

In this section, we present the statements of Rolle's Theorem and the Mean Value Theorem (MVT), followed by their respective analytical proofs based on \cite{bartle_elements, rudin_principles}. Both theorems are fundamental in differential calculus and provide the theoretical foundation for several subsequent results.

\begin{theorem}[Rolle]
Let \( f: [a, b] \to \mathbb{R} \) be a continuous function on \([a, b]\), such that \( f(a) = f(b) \).  
If \( f \) is differentiable on \( (a, b) \), then there exists a point \( c \in (a, b) \) where \( f'(c) = 0 \).
\label{rolle}
\end{theorem}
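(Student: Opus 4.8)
The plan is to build the proof on the Weierstrass Extreme Value Theorem together with the sign-preservation principle recorded in Corollary~\ref{corollary1}. Since \( f \) is continuous on the compact interval \([a,b]\), it attains both a global maximum value \( M \) and a global minimum value \( m \) on \([a,b]\); this is the one external ingredient I would need to invoke (or prove separately), as it is not among the results stated above. From here I would split into two cases according to where these extrema occur.

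In the first case, suppose both the maximum and the minimum are attained only at the endpoints \( a \) and \( b \). Because \( f(a)=f(b) \), this forces \( M=m \), so \( f \) is constant on \([a,b]\). Then \( f'(x)=0 \) for every \( x\in(a,b) \), and any such point serves as the desired \( c \).

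In the remaining case, at least one of the extrema is attained at an interior point \( c\in(a,b) \); say \( f \) attains its maximum at \( c \) (the minimum case is symmetric, or follows by applying the argument to \(-f\)). The heart of the proof is to show \( f'(c)=0 \) using the one-sided behavior of the difference quotient \( q(x)=\dfrac{f(x)-f(c)}{x-c} \). Since \( f(x)\le f(c) \) for \( x \) near \( c \), the numerator is \( \le 0 \), so \( q(x)\le 0 \) for \( x>c \) and \( q(x)\ge 0 \) for \( x<c \). Differentiability at \( c \) gives \( \lim_{x\to c}q(x)=f'(c) \). If \( f'(c)>0 \), then Corollary~\ref{corollary1} would force \( q(x)>0 \) for all \( x \) near \( c \), contradicting \( q(x)\le 0 \) on the right; if \( f'(c)<0 \), the same corollary applied to \(-q\) forces \( q(x)<0 \) near \( c \), contradicting \( q(x)\ge 0 \) on the left. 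Hence \( f'(c)=0 \).

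I expect the main obstacle to be twofold. First, the argument genuinely depends on the Extreme Value Theorem, which must be supplied; without compactness-based attainment of the extrema there is no candidate interior point, and the hypothesis of continuity on the \emph{closed} interval is used precisely here. Second, care is needed in converting the one-sided sign information into a statement about the two-sided limit \( f'(c) \): the correct move is to apply Corollary~\ref{corollary1} to the difference quotient \( q \) (and to \(-q\)) and derive contradictions, rather than to manipulate one-sided limits directly, since only the full limit is guaranteed by differentiability to exist.
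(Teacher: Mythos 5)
Your proof is correct, and while it shares the same high-level skeleton as the paper's proof (split off the constant case, then locate an interior extremum and show the derivative vanishes there), it is a genuinely more complete argument. The paper's own proof is essentially a sketch: it cites Fermat's Theorem as a black box, and it never justifies the crucial step that a non-constant \( f \) with \( f(a)=f(b) \) must attain a maximum or minimum at an \emph{interior} point --- the Extreme Value Theorem is never mentioned, nor is the argument that endpoint-only extrema would force \( f \) to be constant. You supply both missing pieces: the explicit appeal to Weierstrass (honestly flagged as the one external ingredient), and the endpoint dichotomy that makes the constant case fall out naturally rather than being assumed. More interestingly, your proof of the Fermat step --- bounding the sign of the difference quotient \( q(x)=\frac{f(x)-f(c)}{x-c} \) on each side of \( c \) and deriving contradictions from Corollary~\ref{corollary1} applied to \( q \) and \( -q \) --- actually puts the paper's Section 2 machinery to work, which the paper's own proof of Theorem~\ref{rolle} never does; the sign-preservation corollary is stated there and then left unused. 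What the paper's version buys is brevity; what yours buys is a self-contained chain of reasoning in which every step either is proved on the spot, follows from the stated lemmas, or is explicitly identified as an imported theorem. One small point of care you got right: since \( c \) is a \emph{global} extremum on \([a,b]\) and lies in the open interval, points of both signs of \( x-c \) exist arbitrarily close to \( c \) inside the domain, so the contradictions with Corollary~\ref{corollary1} are legitimate on each side.
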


\begin{proof}
If \( f \) is constant on \([a, b]\), then \( f'(c) = 0 \) for all \( c \in (a, b) \), and the theorem is proven in this case.  

Otherwise, \( f \) is not constant, and by Fermat's Theorem, we know that at most \( f'(c) = 0 \) at the local maximum or minimum point of \( f \). Since \( f \) is continuous and differentiable, there exists a point \( c \in (a, b) \) such that \( f'(c) = 0 \).  
The proof is complete.
\end{proof}

\subsection{Relation Between the Theorems}

Rolle's Theorem can be viewed as a special case of the Mean Value Theorem, where \( f(a) = f(b) \). Both provide the foundation for more advanced analyses, such as proving the existence of roots and studying rates of change in physical phenomena.

\section{Pseudocode for Verifying the Mean Value Theorem}

The pseudocode \ref{pseucodigo} presented here aims to verify the Mean Value Theorem for a function \( f(x) \) defined on an interval \([a, b]\). The first step is to check if the function is continuous on \([a, b]\) and differentiable on \((a, b)\), which are the essential conditions for applying the theorem. Next, the average rate of change is calculated as
\[
m = \frac{f(b) - f(a)}{b - a},
\]
which represents the slope of the secant line passing through the points \( (a, f(a)) \) and \( (b, f(b)) \). The pseudocode then applies the bisection method to locate the point \( c \) within the interval \( (a, b) \), where the derivative \( f'(c) \) equals this average rate of change. The bisection method repeatedly divides the interval into two, evaluating the value of \( f'(c_{\text{mid}}) \) at the midpoint of the interval and adjusting the bounds \( c_{\text{left}} \) and \( c_{\text{right}} \) based on the value found, until the difference between the bounds is smaller than a pre-established precision \( \epsilon \). The final result is an approximation of the point \( c \) that satisfies the condition of the theorem, i.e., where \( f'(c) = m \). This algorithm provides an efficient solution for numerically finding the point \( c \) when an analytical solution is not possible.

\begin{algorithm}[H]
\caption{Verifying the Mean Value Theorem}
\begin{algorithmic}[1]
\REQUIRE Function \( f(x) \), interval \([a, b]\), precision \( \epsilon \)
\ENSURE Verifies if there exists \( c \in (a, b) \) such that \( f'(c) = \frac{f(b) - f(a)}{b - a} \)

\STATE Check if \( f(x) \) is continuous on \([a, b]\)
\IF{\( f(x) \) is not continuous on \([a, b]\)}
    \STATE Return "The Mean Value Theorem does not apply"
\ENDIF

\STATE Check if \( f(x) \) is differentiable on \((a, b)\)
\IF{\( f(x) \) is not differentiable on \((a, b)\)}
    \STATE Return "The Mean Value Theorem does not apply"
\ENDIF

\STATE Calculate the average rate of change:
\[
m = \frac{f(b) - f(a)}{b - a}
\]

\STATE Initialize the bounds of the interval:
\[
c_{\text{left}} = a, \quad c_{\text{right}} = b
\]

\WHILE{\( c_{\text{right}} - c_{\text{left}} > \epsilon \) \textbf{(desired precision)}}
    \STATE Calculate the midpoint:
    \[
    c_{\text{mid}} = \frac{c_{\text{left}} + c_{\text{right}}}{2}
    \]
    \IF{\( f'(c_{\text{mid}}) = m \)}
        \STATE Return \( c = c_{\text{mid}} \)
    \ELSIF{\( f'(c_{\text{mid}}) > m \)}
        \STATE Update \( c_{\text{right}} = c_{\text{mid}} \)
    \ELSE
        \STATE Update \( c_{\text{left}} = c_{\text{mid}} \)
    \ENDIF
\ENDWHILE

\STATE Return \( c = \frac{c_{\text{left}} + c_{\text{right}}}{2} \) as the approximation of the desired point
\end{algorithmic}
\label{pseucodigo}
\end{algorithm}

\section{Examples of Rolle's Theorem and the Mean Value Theorem}

In this section, we present three examples to illustrate the application of Rolle's Theorem and the Mean Value Theorem, using functions with different levels of complexity, along with computational verification.

\subsection{Example 1: Simple Function - Parabolic (Rolle's Theorem)}

Consider the function \( f(x) = x^2 - 4x + 3 \) defined on the interval \([1, 3]\). This function is continuous and differentiable on \([1, 3]\), and satisfies the condition \( f(1) = f(3) = 0 \). Rolle's Theorem guarantees that there exists a point \( c \in (1, 3) \) such that \( f'(c) = 0 \).

The derivative of the function is:
\[
f'(x) = 2x - 4.
\]
Solving \( f'(c) = 0 \), we find \( c = 2 \). Therefore, the point \( c = 2 \) satisfies the conditions of Rolle's Theorem.

\begin{figure}[H]
    \centering   
    \includegraphics[width=1\textwidth]{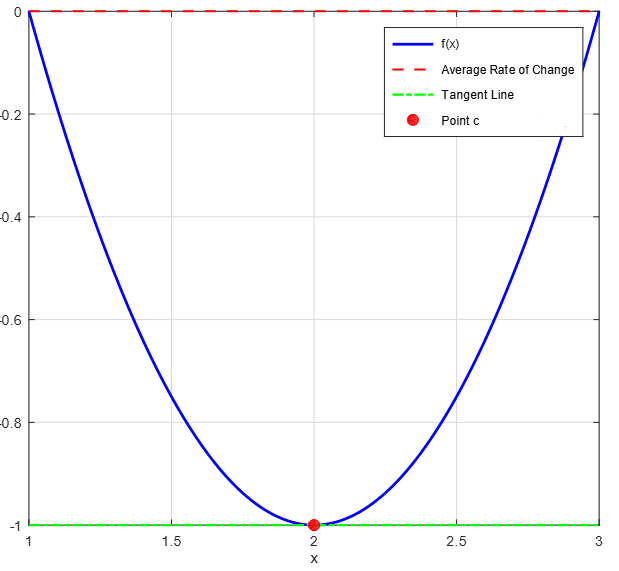}
    \caption{Execution of the algorithm in Example 1.}
    \label{fig:exemplo1}
\end{figure}

Figure~\ref{fig:exemplo1} illustrates the execution of the algorithm in Example 1.

\subsection{Example 2: Trigonometric Function (Mean Value Theorem)}

Consider the function \( f(x) = \sin(x) \) defined on the interval \([0, \pi/2]\). This function is continuous and differentiable on \([0, \pi/2]\). The slope of the secant line on the interval is given by:
\[
m = \frac{f\left(\frac{\pi}{2}\right) - f(0)}{\frac{\pi}{2} - 0} = \frac{\sin\left(\frac{\pi}{2}\right) - \sin(0)}{\frac{\pi}{2}} = \frac{1 - 0}{\frac{\pi}{2}} = \frac{2}{\pi}.
\]

By the Mean Value Theorem, there exists a point \( c \in (0, \pi/2) \) such that \( f'(c) = m \), i.e., \( f'(c) = \frac{2}{\pi} \).
The derivative of the function \( f(x) = \sin(x) \) is:
\begin{equation}
  f'(x) = \cos(x).  
\end{equation}
Now, we solve the equation \( \cos(c) = \frac{2}{\pi} \) to find the value of \( c \) in the interval \( (0, \pi/2) \). The solution to this equation gives the value of \( c \) that satisfies the conditions of the Mean Value Theorem.

\begin{figure}[H]
    \centering            \includegraphics[width=1\textwidth]{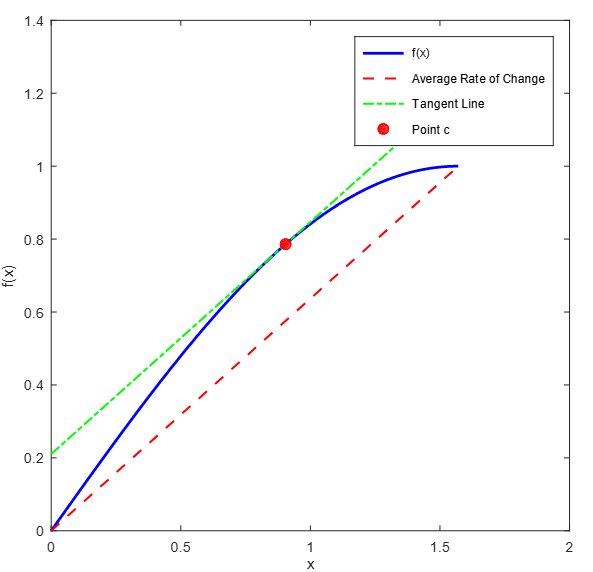}
    \caption{Execution of the algorithm in Example 2.}
    \label{fig:exemplo2}
\end{figure}

Figure~\ref{fig:exemplo2} illustrates the execution of the algorithm to find the point \( c \) in Example 2.

\section{Conclusion}

The examples presented show practical applications of Rolle's Theorem and the Mean Value Theorem, highlighting how the conditions of continuity, differentiability, and slope properties guarantee the existence of critical points or points that satisfy the average rate of change.

\bibliographystyle{unsrt}  
\bibliography{references}

\end{document}